\theoremstyle{change} 
\newtheorem{theorem}{Theorem}[section] 
\newtheorem{lemma}[theorem]{Lemma} 
\newtheorem{proposition}[theorem]{Proposition}
\newtheorem{remark}[theorem]{Remark}
\newtheorem{definition}[theorem]{Definition}
\newtheorem{notation}[theorem]{Notation}
\newtheorem{nothing}[theorem]{} 
\newenvironment{proof}{\noindent{\bf Proof}\ }{\qed\bigskip}
\renewcommand{\le}{\leqslant}
\renewcommand{\leq}{\leqslant}
\renewcommand{\marginpar}[1]{}
\newcommand{\BIGOP}[1]
  {\mathop{\mathchoice
  {\raise-0.22em\hbox{\huge $#1$}}
  {\raise-0.05em\hbox{\Large $#1$}}{\hbox{\large $#1$}}{#1}}}
\newcommand{\catC}{\catfont{C}}
\newcommand{\catfont}{\mathsf}
\newcommand{\CC}{\mathbb{C}}
\newcommand{\disj}{\buildrel.\over\cup}
\newcommand{\bigdisj}{\mathop{\buildrel.\over\bigcup}}
\newcommand{\End}{\mathrm{End}}
\newcommand{\Hom}{\mathrm{Hom}}
\newcommand{\Hd}{\mathrm{Hd}}
\newcommand{\Mor}{\mathrm{Mor}}
\newcommand{\liso}{\buildrel\sim\over\longrightarrow}
\newcommand{\Ob}{\mathrm{Ob}}
\newcommand{\qed}{\nobreak\hfill
                   \vbox{\hrule\hbox{\vrule\hbox to 5pt
                   {\vbox to 8pt{\vfil}\hfil}\vrule}\hrule}}
\newcommand{\Rad}{\mathrm{Rad}}
\newcommand{\scrJ}{\mathscr{J}}
\title{Twisted split category algebras as quasi-hereditary algebras
\footnote{{\bf MR Subject Classification:}  16G10, 20M17.
{\bf Keywords:}  twisted category algebra, regular semigroup, quasi-hereditary algebra}}
\author{\small Robert Boltje\\
  \small Department of Mathematics\\
  \small University of California\\
  \small Santa Cruz, CA 95064\\
  \small U.S.A.\\
  \small boltje@ucsc.edu
  \and
  \small Susanne Danz\\
  \small Department of Mathematics\\ 
  \small University of Kaiserslautern\\
  \small P.O. Box 3049\\
  \small 65653 Kaiserslautern\\
  \small Germany\\
  \small danz@mathematik.uni-kl.de}
\date{August, 14, 2012}
\begin{document}
\sloppy


\maketitle


\begin{abstract}
We show that if $\catC$ is a finite split category, $k$ is a field of characteristic $0$ and $\alpha$ is a $2$-cocycle of $\catC$ with values in $k^\times$ then the twisted category algebra $k_\alpha\catC$ is quasi-hereditary.
\end{abstract}


\section{Introduction}\label{sec intro}
Throughout this paper we assume that $\catC$ is a finite category, that is,
the objects of $\catC$ form a finite set, and for every $X,Y\in\Ob(\catC)$,
the morphism set $\Hom_\catC(X,Y)$ is finite.
The category $\catC$ is called {\it split} if, for each morphism $s\in\Hom_\catC(X,Y)$, there
is a (not necessarily unique)
morphism $t\in\Hom_\catC(Y,X)$ such that $s\circ t\circ s=s$. 
Note that $u:=t\circ s\circ t$ then also satisfies $s\circ u\circ s=s$,
and also $u\circ s\circ u=u$.
In the special case where $\catC$ has only one object this leads to
the notion of a {\it regular monoid}, see \cite{Gr1}.

Let $k$ be a field, and let
$\alpha$ be a $2$-cocycle of $\catC$ with values in $k^\times$. That is,
for every pair $s,t\in\Mor(\catC)$ such that $t\circ s$
exists, one has an element $\alpha(t,s)\in k^\times$ such that the following holds:
for any $s,t,u\in\Mor(\catC)$ such that $t\circ s$ and $u\circ t$ exist,
one has $\alpha(u\circ t,s)\alpha(u,t)=\alpha(u,t\circ s)\alpha(t,s)$. 
We will study the twisted category algebra $k_\alpha\catC$, that is,
the $k$-vector space with basis $\Mor(\catC)$ and multiplication
$$t\cdot s:=\begin{cases}
           \alpha(t,s)\cdot t\circ s & \text{ if } t\circ s \text{ exists,}\\
           0                         & \text{ otherwise.} 
\end{cases}$$

\smallskip
The aim of this paper, see Theorem~\ref{thm qh}, is to show that if $\catC$ is a finite split category and if $k$ has characteristic $0$ then $k_\alpha \catC$ is a quasi-hereditary algebra. This generalizes a result of Putcha, see~\cite{Putcha}, who proved that regular monoid algebras are quasi-hereditary over $k=\CC$. In Theorem~\ref{thm std modules} we identify the standard modules, generalizing Putcha's results in \cite{Putcha}.

\smallskip
Our main motivation for studying the quasi-hereditary structure of twisted category algebras comes from the theory of double Burnside rings and biset functors: by
a result of Webb, see \cite{Webb}, the category of biset functors over a field of characteristic $0$ is a highest weight category. In \cite[Example 5.15(b)]{BDII} we introduced an algebra $A$ with the property that the category of biset functors (on a finite set of groups) over a field of characteristic $0$ is equivalent to the category of $eAe$-modules, where $e$ is an idempotent of $A$.
Thus, by Webb's result, $eAe$ is a quasi-hereditary algebra. It is natural to ask whether also $A$ is quasi-hereditary. In \cite{BDII} it was also shown that $A$ is a twisted category algebra for a finite split category. Thus 
Theorem~\ref{thm qh} of the present paper, in particular, implies that the algebra $A$ in \cite{BDII} is indeed quasi-hereditary.

\smallskip
We further remark that Theorems~\ref{thm qh} and \ref{thm std modules} should be of independent interest, since, by work of Wilcox~\cite{Wil}, they also cover various prominent classes of cellular algebras (for suitable parameters) such as Brauer algebras, cyclotomic Brauer algebras, Temperley--Lieb algebras, and partition algebras, so that the main result of this paper gives a unified proof for the known fact that these algebras are quasi-hereditary.
Proofs of the quasi-heredity of the aforementioned diagram algebras can, for instance,
be found in \cite{GL, M, RX, RY, West, X},
and also in work of
K\"onig--Xi \cite{KX}, who established necessary and sufficient
criteria for a cellular algebra
to be quasi-hereditary.
For a more detailed discussion of the history of proofs that Brauer algebras, Temperley--Lieb algebras, and partition algebras are quasi-hereditary over coefficient fields of characteristic $0$, we refer to \cite{LS2}.

\smallskip
We recently learnt that Linckelmann and Stolorz, see \cite[Theorem~1.1]{LS2}, independently proved that, under certain conditions on the category, finite twisted category algebras are quasi-hereditary in characteristic $0$. These conditions on the category are even weaker than being split, and therefore the results in \cite{LS2} imply Theorem~\ref{thm qh}. However, the two approaches are slightly different; for instance, we explicitly determine the radical of the twisted category algebra as part of our proof. In addition, we construct the standard modules of $k_\alpha\catC$. 
As has been pointed out by the referee, in the case where $k_\alpha\catC$
is isomorphic to a Brauer algebra and $\mathrm{char}(k)=0$, standard
modules have also been investigated by Cox--De Visscher--Martin
in \cite{CdVM}; standard modules for cyclotomic Brauer algebras over fields 
of characteristic 0 have recently been studied by Bowman--Cox--De Visscher in
\cite{BCdV}.

\medskip
{\bf Acknowledgement.} The authors' research has been supported through a Research in Pairs Grant of the Mathematisches Forschungsinstitut Oberwolfach in February 2012, and through DFG grant DA 1115/3-1.


\section{Notation and quoted results}\label{sec notation}

Throughout this section we assume that $\catC$ is a finite split category.
We begin by collecting some known facts concerning split
categories that will be used repeatedly in this paper. 
For details and proofs of the results quoted here we refer the
reader to \cite{LS} and \cite{GMS}.

In what follows, given subsets $S$ and  $T$ of $\Mor(\catC)$, we set
$S\circ T:=\{s\circ t\mid s\in S,\, t\in T\text{ such that $s\circ t$ exists}\}$. In the case
where $S=\{s\}$ or $T=\{t\}$, we abbreviate $S\circ T$ by
$s\circ T$ or $S\circ t$, respectively. Note that $S\circ T$ may be 
empty, even if neither $S$ nor $T$ is empty. 

One calls $S$ a {\it left ideal} (respectively, {\it right ideal})
of $\catC$ if $\Mor(\catC)\circ S\subseteq S$ (respectively,
$S\circ\Mor(\catC)\subseteq S$). Note that this is
equivalent to $\Mor(\catC)\circ S= S$ (respectively,
$S\circ\Mor(\catC)= S$), since every object has an identity morphism.
Analogously, one calls $S$ a {\it (two-sided) ideal} of $\catC$
if $\Mor(\catC)\circ S\circ \Mor(\catC) \subseteq S$, or equivalently,
if $\Mor(\catC)\circ S\circ \Mor(\catC)= S$.

\begin{nothing}\label{noth J-classes}
{\bf Idempotents and $\scrJ$-classes.}\, (a)\, For 
morphisms $s,t\in\Mor(\catC)$ one defines
$$s\, \scrJ\, t :\Leftrightarrow \Mor(\catC)\circ s\circ\Mor(\catC)=\Mor(\catC)\circ t\circ\Mor(\catC)\, .$$
This yields an equivalence relation $\scrJ$ on the set $\Mor(\catC)$, and the
corresponding equivalence classes are called the {\it $\scrJ$-classes}
of $\catC$. We will denote the $\scrJ$-class of a morphism $s\in\Mor(\catC)$
by $\scrJ(s)$.

\smallskip
(b)\, Let $I$ and $J$ be $\scrJ$-classes of $\catC$. One sets
$$J\leq_\scrJ I:\Leftrightarrow \Mor(\catC)\circ J\circ\Mor(\catC)\subseteq \Mor(\catC)\circ I\circ \Mor(\catC)\, .$$
Note that this is also equivalent to 
$\Mor(\catC)\circ s\circ \Mor(\catC)\subseteq \Mor(\catC)\circ u\circ\Mor(\catC)$, 
where $s$ and $u$ are any representatives of $J$ and
$I$, respectively. Note further that this defines a poset structure
on the set of $\scrJ$-classes of $\catC$.

\smallskip
(c)\, An {\it idempotent} of $\catC$ is an endomorphism
$e\in\End_\catC(X)$ such that $e\circ e=e$, where $X$ is an object of
$\catC$. In this case we call $e$ an {\it idempotent on $X$}.

We say that idempotents $e$ on $X$ and $f$ on $Y$ are {\it equivalent}
if there exist some $s\in e\circ\Hom_\catC(Y,X)\circ f$ and
some $t\in f\circ\Hom_\catC(X,Y)\circ e$ such that $e=s\circ t$ and $f=t\circ s$.
In this case we write $e\sim f$. It is
straightforward to show that this defines an equivalence relation
on the set of idempotents of $\catC$; we will denote the 
equivalence class of an idempotent $e$ by $[e]$.

\smallskip

(d)\, 
The next lemma shows that every $\scrJ$-class of $\catC$ contains an
idempotent. Furthermore, idempotents
$e$ and $f$ of $\catC$ are equivalent if and only if $\scrJ(e)=\scrJ(f)$;
a proof of this can be found in \cite[Lemma 2.1]{LS}.
Thus there is a bijection between the equivalence classes of idempotents of $\catC$ and the
$\scrJ$-classes of $\catC$.
\end{nothing}

\begin{lemma}\label{lemma id equiv}
Let $s\in\Mor(\catC)$, and let $t,u\in\Mor(\catC)$ be such that
$s\circ t\circ s=s=s\circ u\circ s$. 
Then $s\circ t$ and $u\circ s$ are idempotents in 
$\catC$.
Moreover, 
$$\scrJ(s\circ t)=\scrJ(s)=\scrJ(u\circ s)\,;$$
in particular, $s\circ t\sim u\circ s$.
\end{lemma}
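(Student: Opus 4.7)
The plan is to handle each assertion by direct manipulation of the two defining identities $s\circ t\circ s=s$ and $s\circ u\circ s=s$, together with the cited fact from~\ref{noth J-classes}(d) that equivalence of idempotents is detected by $\scrJ$-class.

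First I would verify idempotency by a one-line computation in each case. For $s\circ t$ I would write
$(s\circ t)\circ(s\circ t)=(s\circ t\circ s)\circ t=s\circ t$,
using associativity and the first hypothesis. Symmetrically, for $u\circ s$ I would write
$(u\circ s)\circ(u\circ s)=u\circ(s\circ u\circ s)=u\circ s$,
using the second hypothesis. Note that if $s\in\Hom_\catC(X,Y)$, then necessarily $t,u\in\Hom_\catC(Y,X)$, so these composites make sense and $s\circ t\in\End_\catC(Y)$, $u\circ s\in\End_\catC(X)$.

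Next I would prove $\scrJ(s)=\scrJ(s\circ t)$ by showing the two-sided ideals generated by $s$ and by $s\circ t$ coincide. For the inclusion $\Mor(\catC)\circ(s\circ t)\circ\Mor(\catC)\subseteq\Mor(\catC)\circ s\circ\Mor(\catC)$ it suffices to observe that $s\circ t=\id_Y\circ s\circ t$ already lies in $\Mor(\catC)\circ s\circ\Mor(\catC)$. For the reverse inclusion I would use the identity $s=s\circ t\circ s=\id_Y\circ(s\circ t)\circ s$, placing $s$ itself inside $\Mor(\catC)\circ(s\circ t)\circ\Mor(\catC)$. The argument for $\scrJ(s)=\scrJ(u\circ s)$ is entirely symmetric: $u\circ s\in\Mor(\catC)\circ s\circ\Mor(\catC)$ trivially, and $s=s\circ u\circ s=s\circ(u\circ s)\circ\id_X$ shows the other inclusion.

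Finally, for the equivalence $s\circ t\sim u\circ s$ the cleanest route is to appeal directly to the statement recalled in~\ref{noth J-classes}(d): two idempotents of $\catC$ are equivalent if and only if they share a $\scrJ$-class. Since we have just shown $\scrJ(s\circ t)=\scrJ(s)=\scrJ(u\circ s)$, the conclusion is immediate. If a self-contained witness is preferred, one can take $a:=s\in\Hom_\catC(X,Y)$ and $b:=u\circ s\circ t\in\Hom_\catC(Y,X)$; then $a\circ b=(s\circ u\circ s)\circ t=s\circ t$ and $b\circ a=u\circ(s\circ t\circ s)=u\circ s$, while $a=(s\circ t)\circ s\circ(u\circ s)$ and $b=(u\circ s)\circ t\circ(s\circ t)$, showing the membership conditions demanded by the definition of $\sim$. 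I do not expect any real obstacle: everything reduces to bookkeeping with associativity and the two given equations, the only mild subtlety being the insertion of identity morphisms to place elements into the prescribed two-sided products.
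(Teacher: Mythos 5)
Your proof is correct and follows the same route as the paper: verify idempotency directly, show $\scrJ(s\circ t)=\scrJ(s)=\scrJ(u\circ s)$ by two-sided ideal inclusions, and invoke the fact (quoted from \cite[Lemma~2.1]{LS} in \ref{noth J-classes}(d)) that idempotents in the same $\scrJ$-class are equivalent. The paper simply labels these steps as ``clear''; your explicit witness $a=s$, $b=u\circ s\circ t$ for the equivalence is a correct and self-contained extra that the paper omits.
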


\begin{proof}
Clearly, $s\circ t$ and $u\circ s$ are idempotents in 
$\catC$ contained in the $\scrJ$-class $\scrJ(s)$. Thus, as already
mentioned, \cite[Lemma 2.1]{LS} implies $s\circ t\sim u\circ s$.
\end{proof}

Suppose now that $k$ is a field, and let
$\alpha$ be a $2$-cocycle of $\catC$ with values in $k^\times$. The aim of the next
section is to prove that, under suitable additional assumptions on $k$,
the $k$-algebra
$k_\alpha\catC$ is {\it quasi-hereditary}.
To this end, we summarize and establish here some important facts concerning the
algebra $k_\alpha\catC$ and, in particular, its simple modules and its 
Jacobson radical. For ease of notation, we 
will henceforth denote the twisted category algebra
$k_\alpha\catC$ by $A$.

\begin{nothing}\label{noth simples}
{\bf Idempotents of $\catC$ and simple $A$-modules.}\, 
The isomorphism classes of simple $A$-modules have been 
parametrized by Linckelmann--Stolorz \cite{LS}, generalizing previous
work of Ganyushkin--Mazorchuk--Steinberg \cite{GMS} concerning
semigroup algebras. 

\smallskip

(a)\, Given
an idempotent $e$ on $X\in\Ob(\catC)$, the group of invertible elements
of the monoid $e\circ\End_\catC(X)\circ e$ is denoted by $\Gamma_e$, and is
called a {\it maximal subgroup} of $\catC$. Moreover,
we set $J_e:=e\circ\End_\catC(X)\circ e\smallsetminus \Gamma_e$.
Restricting the $2$-cocycle $\alpha$ to $\Gamma_e$, one can
view the twisted group algebra $k_\alpha\Gamma_e$ as (non-unitary)
subalgebra of $A$.

Note also that the element
$$e':=\alpha(e,e)^{-1} e$$
is an idempotent in the algebra $A$, and that 
$eAe=e'Ae'=k_\alpha(e\circ\End_\catC(X)\circ e)$. Furthermore, there is a
$k$-vector space decomposition
\begin{equation}\label{eqn Gamma-J-decomposition}
e'Ae'=k_\alpha\Gamma_e\oplus kJ_e\,,
\end{equation}
$kJ_e$ is a two-sided ideal, and $k_\alpha\Gamma_e$ is a unitary 
subalgebra of $e'Ae'$.

\smallskip
(b)\, Suppose that $e$ is an idempotent of $\catC$,
and let again $e'$ denote the corresponding idempotent in $A$.
Whenever $W$ is a $k_\alpha\Gamma_e$-module, we obtain an $A$-module
$Ae'\otimes_{e'Ae'}\tilde{W}$, where $\tilde{W}$ is the inflation
of $W$ from $k_\alpha\Gamma_e$ to $e'Ae'$ with respect to the 
decomposition (\ref{eqn Gamma-J-decomposition}).
In the case where $W$ is a simple $k_\alpha\Gamma_e$-module,
the $A$-module $Ae'\otimes_{e'Ae'}\tilde{W}$ has a unique
simple quotient module; see \cite[Section 6.2]{Gr}.

\smallskip
(c)\, Let $e\in\End_{\catC}(X)$ and $f\in\End_{\catC}(Y)$ be equivalent idempotents of $\catC$ and let $s\in e\circ\Hom_{\catC}(Y,X)\circ f$ and $t\in f\circ\Hom_{\catC}(X,Y)\circ e$ be such that $e=s\circ t$ and $f=t\circ s$. Then the map $a\mapsto t\cdot a\cdot s$ defines a $k$-linear isomorphism between $e'A e'$ and $f'Af'$, which takes $kJ_e$ to $kJ_f$. Similarly, one obtains an isomorphism of left $A$-modules between $Ae'$ and $Af'$. Altogether one obtains an isomorphism of left $A$-modules
\begin{equation}\label{eqn indep of e}
  Ae'\otimes_{e'Ae'} (e'Ae'/kJ_e) \liso Af'\otimes_{f'Af'} (f'Af'/kJ_f)\,.
\end{equation}
\end{nothing}

\begin{notation}\label{not simples}
From now on, we denote by $e_1,\ldots,e_n$ representatives 
of the equivalence classes of idempotents of $\catC$, and for $i=1,\ldots,n$
we fix representatives $T_{i1},\ldots,T_{il_i}$ of the isomorphism classes
of simple $k_\alpha\Gamma_{e_i}$-modules. Moreover, for
$i=1,\ldots,n$ and $j=1,\ldots,l_i$, we denote the inflation of the 
$k_\alpha\Gamma_{e_i}$-module $T_{ij}$ to $e_i'Ae_i'$ by $\tilde{T}_{ij}$, and the
simple head of the $A$-module $A e_i'\otimes_{e_i'Ae_i'}\tilde{T}_{ij}$
by $D_{ij}$. With this, the following holds:
\end{notation}

\begin{theorem}[\cite{LS}, Theorem 1.2]\label{thm simples}
The modules $D_{ij}$ ($i=1,\ldots,n$, 
$j=1,\ldots,l_i$) form a set of representatives of the isomorphism
classes of simple $A$-modules.
\end{theorem}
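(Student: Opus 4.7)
The strategy is to combine Green's idempotent correspondence (cited in 2.3(b)) with the $\scrJ$-class filtration of $A$ by the two-sided ideals $A_{\leq I}$ (spanned by morphisms $s$ with $\scrJ(s)\leq_\scrJ I$) and $A_{<I}$ (analogously). These are indeed ideals since composition does not increase $\scrJ$-classes. For every simple $A$-module $S$, the finiteness of the $\scrJ$-poset gives the existence of a $\scrJ$-class $I^*(S)$ that is minimal subject to $A_{\leq I^*(S)}\cdot S\neq 0$. I will also use the stability property of finite regular monoids: $a\in e\End_\catC(X)e$ with $\scrJ(a)=\scrJ(e)$ forces $a\in\Gamma_e$, so that $kJ_e\subseteq A_{<\scrJ(e)}$ (cf.\ \cite[Lemma~2.1]{LS}).

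For surjectivity, let $S$ be simple. By minimality, $A_{<I^*(S)}\cdot S=0$. Pick an idempotent $e=e_i$ in $I^*(S)$; the isomorphism (\ref{eqn indep of e}) in 2.3(c) shows that the resulting simple $A$-module is independent of this choice up to isomorphism. One verifies $Ae'A=A_{\leq\scrJ(e)}$: the inclusion $\subseteq$ is immediate since $e'\in A_{\leq\scrJ(e)}$ and the latter is an ideal, and $\supseteq$ follows by writing any $s$ with $\scrJ(s)\leq\scrJ(e)$ as $a\circ e\circ b$. Hence $e'S\neq 0$. Green's theorem then makes $e'S$ a simple $e'Ae'$-module, and since $kJ_e\subseteq A_{<\scrJ(e)}$ annihilates $S$, the ideal $kJ_e$ acts as zero on $e'S$. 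Thus $e'S$ is a simple $k_\alpha\Gamma_e$-module, isomorphic to some $T_{ij}$, and by Green's bijection $S\cong D_{ij}$.

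The main obstacle is to prove $I^*(D_{ij})=\scrJ(e_i)$. Once this is done, injectivity follows: if $D_{ij}\cong D_{i'j'}$ then $\scrJ(e_i)=\scrJ(e_{i'})$ forces $i=i'$ (distinct representatives being inequivalent), and then $e'_iD_{ij}\cong\tilde T_{ij}\cong\tilde T_{ij'}$ as $k_\alpha\Gamma_{e_i}$-modules forces $j=j'$; moreover, $I^*(S)$ is uniquely determined a posteriori. The direction $I^*(D_{ij})\leq\scrJ(e_i)$ is clear from $e'_iD_{ij}\cong\tilde T_{ij}\neq 0$. The nontrivial direction reduces, by linearity, to showing $s\otimes v=0$ in $M_{ij}:=Ae'_i\otimes_{e'_iAe'_i}\tilde T_{ij}$ whenever $s$ is a basis morphism of $Ae'_i$ (so $s=s\circ e_i$) with $\scrJ(s)<\scrJ(e_i)$, and $v\in\tilde T_{ij}$. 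Here the split hypothesis enters: pick $t$ with $s\circ t\circ s=s$ and set $f:=e_i\circ t\circ s\circ e_i$. By Lemma~\ref{lemma id equiv} applied to $t\circ s$, $\scrJ(t\circ s)=\scrJ(s)<\scrJ(e_i)$, so $\scrJ(f)\leq\scrJ(t\circ s)<\scrJ(e_i)$ and $f\in J_{e_i}$. The category computation $s\circ f=s\circ e_i\circ t\circ s\circ e_i=s\circ t\circ s\circ e_i=s\circ e_i=s$ yields $s\cdot f=\alpha(s,f)\cdot s$ in $A$, hence $s\otimes v=\alpha(s,f)^{-1}(s\cdot f)\otimes v=\alpha(s,f)^{-1}s\otimes(f\cdot v)=0$ since $f\in kJ_{e_i}$ acts trivially on $\tilde T_{ij}$.
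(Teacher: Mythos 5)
The paper does not prove Theorem~\ref{thm simples}; it simply quotes \cite[Theorem~1.2]{LS}. So there is no in-paper proof to compare against, and your attempt has to be judged on its own merits.

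Your overall strategy (apex $\scrJ$-class plus Green's condensation) is the right one, and your key computation is correct and nicely done: for $s=s\circ e_i$ with $\scrJ(s)<_\scrJ\scrJ(e_i)$, picking $t$ with $s\circ t\circ s=s$ and setting $f=e_i\circ t\circ s\circ e_i$ does land $f$ in $J_{e_i}$ (by Lemma~\ref{lemma id equiv} and $\scrJ(f)\leq_\scrJ\scrJ(t\circ s)=\scrJ(s)<_\scrJ\scrJ(e_i)$), and $s\circ f=s$ gives $s\otimes v=\alpha(s,f)^{-1}s\otimes(f\cdot v)=0$. This correctly yields $A_{<\scrJ(e_i)}\cdot\Delta_{ir}=0$. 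Your surjectivity argument ($Ae'A=A_{\leq \scrJ(e)}$, $e'S$ is a simple $e'Ae'$-module killed by $kJ_e$, hence a simple $k_\alpha\Gamma_e$-module) is also fine.

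There is, however, a gap in the injectivity step. You define $I^*(S)$ as \emph{a} minimal $\scrJ$-class $I$ with $A_{\leq I}\cdot S\neq 0$, and then use it as though it were well-defined: ``if $D_{ij}\cong D_{i'j'}$ then $\scrJ(e_i)=\scrJ(e_{i'})$'' needs that the minimal apex is \emph{unique}, which you only assert to follow ``a posteriori''. But that reasoning is circular: you are using uniqueness of $I^*$ to prove injectivity, while the ``a posteriori'' observation would itself rely on the classification being established. Concretely, your argument so far only shows that each of $\scrJ(e_i)$ and $\scrJ(e_{i'})$ is \emph{a} minimal element of the up-set $\{I:A_{\leq I}\cdot D_{ij}\neq 0\}$; nothing yet rules out two incomparable minimal apexes. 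The fix is short: if $\scrJ(e_i)$ and $\scrJ(e_{i'})$ were incomparable, then $e'_iAe'_{i'}$ is spanned by morphisms in $e_i\circ S\circ e_{i'}$, each of which has $\scrJ$-class $\leq_\scrJ\scrJ(e_i)$ and $\leq_\scrJ\scrJ(e_{i'})$, hence $<_\scrJ\scrJ(e_i)$ by incomparability; so $e'_iAe'_{i'}\subseteq A_{<\scrJ(e_i)}$ annihilates $D_{ij}$, and then simplicity ($Ae'_{i'}D_{ij}=D_{ij}$) forces $e'_iD_{ij}=e'_iAe'_{i'}D_{ij}=0$, contradicting $e'_iD_{ij}\cong\tilde T_{ij}\neq 0$. (Equivalently, two incomparable ideals $A_{\leq I_1},A_{\leq I_2}$ satisfy $A_{\leq I_1}A_{\leq I_2}\subseteq A_{<I_1}$, and simplicity forbids both acting nontrivially.) Adding this paragraph closes the gap and makes your injectivity argument sound. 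Finally, a small citation slip: the fact $\Gamma_e=(e\circ S\circ e)\cap\scrJ(e)$ that you invoke is \cite[Lemma~2.6]{LS} (quoted in Definition~\ref{defi ideals S}), not \cite[Lemma~2.1]{LS}.
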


Denoting the Jacobson radical of $A$ by $\mathbf{J}(A)$, Theorem~\ref{thm simples}
now leads to the following description:

\begin{proposition}\label{prop radical}
With the notation as in \ref{not simples}
one has
\begin{equation}\label{eqn radical}
\mathbf{J}(A)=\{u\in A\mid \forall\, i=1,\ldots,n: e_i'AuAe_i'\subseteq kJ_{e_i}+\mathbf{J}(k_\alpha \Gamma_{e_i})\}\,.
\end{equation}
In particular, if in addition $|\Gamma_{e_i}|\in k^\times$, for all 
$i=1,\ldots,n$, then
\begin{equation}\label{eqn radical2}
\mathbf{J}(A)=\{u\in A\mid \forall\, i=1,\ldots,n: e_i'AuAe_i'\subseteq kJ_{e_i}\}\,.
\end{equation}
\end{proposition}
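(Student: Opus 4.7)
The strategy is to use Theorem~\ref{thm simples}, which identifies the simple $A$-modules as $\{D_{ij}\}$, to write $\mathbf{J}(A) = \bigcap_{i,j}\mathrm{Ann}_A(D_{ij})$, and then to translate the condition $u \in \mathrm{Ann}_A(D_{ij})$ into a condition on the truncated object $e_i'AuAe_i' \subseteq e_i'Ae_i'$. Recall from \ref{noth simples}(b) that $D_{ij}$ is the simple head of $M_{ij} := Ae_i' \otimes_{e_i'Ae_i'} \tilde{T}_{ij}$. The canonical identification $e_i'M_{ij} \cong \tilde{T}_{ij}$ of $e_i'Ae_i'$-modules, together with the fact that $\tilde{T}_{ij}$ is a simple $e_i'Ae_i'$-module (being the inflation of the simple $k_\alpha\Gamma_{e_i}$-module $T_{ij}$ via the decomposition (\ref{eqn Gamma-J-decomposition})), forces $e_i'D_{ij} \cong \tilde{T}_{ij}$ as $e_i'Ae_i'$-modules. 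Moreover $D_{ij} = A \cdot e_i'D_{ij}$, because the image of $\tilde{T}_{ij}$ in $D_{ij}$ is nonzero (otherwise $D_{ij}$ would be zero) and generates $D_{ij}$ as an $A$-module.

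The key reduction is the equivalence
\[
u \in \mathrm{Ann}_A(D_{ij}) \iff e_i'AuAe_i' \text{ annihilates } \tilde{T}_{ij},
\]
valid for each pair $(i,j)$. The implication ``$\Rightarrow$'' is clear, since $e_i'AuAe_i' \subseteq AuA \subseteq \mathrm{Ann}_A(D_{ij})$ acts on the subspace $e_i'D_{ij} \cong \tilde{T}_{ij}$. For ``$\Leftarrow$'', observe that $AuA \cdot e_i'D_{ij}$ is an $A$-submodule of the simple module $D_{ij}$, hence either $0$ or $D_{ij}$; in the latter case, applying $e_i'$ would yield $(e_i'AuAe_i')(e_i'D_{ij}) = e_i'(AuA \cdot e_i'D_{ij}) = e_i'D_{ij} \neq 0$, contradicting the hypothesis. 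Therefore $AuA \cdot e_i'D_{ij} = 0$, and since $D_{ij} = A \cdot e_i'D_{ij}$ together with unitality of $A$ yields $AuA \cdot D_{ij} = (AuA \cdot A)(e_i'D_{ij}) = AuA \cdot e_i'D_{ij} = 0$, we conclude $u \in \mathrm{Ann}_A(D_{ij})$.

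Granted this reduction, the rest is routine. For fixed $i$, as $j$ varies over $1,\ldots,l_i$, the modules $\tilde{T}_{ij}$ give representatives of the simple $e_i'Ae_i'$-modules on which $kJ_{e_i}$ acts trivially, and under the quotient $e_i'Ae_i'/kJ_{e_i} = k_\alpha\Gamma_{e_i}$ they become precisely the simple $k_\alpha\Gamma_{e_i}$-modules $T_{ij}$. Hence $e_i'AuAe_i'$ annihilates all $\tilde{T}_{ij}$ if and only if its image in $k_\alpha\Gamma_{e_i}$ lies in $\mathbf{J}(k_\alpha\Gamma_{e_i})$, which in turn is equivalent to $e_i'AuAe_i' \subseteq kJ_{e_i} + \mathbf{J}(k_\alpha\Gamma_{e_i})$. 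Intersecting over $i$ gives (\ref{eqn radical}). The special case (\ref{eqn radical2}) then follows by Maschke's theorem for twisted group algebras: when $|\Gamma_{e_i}|$ is invertible in $k$ for every $i$, each $k_\alpha\Gamma_{e_i}$ is semisimple, so $\mathbf{J}(k_\alpha\Gamma_{e_i}) = 0$.

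The most delicate step is the ``$\Leftarrow$'' direction of the key equivalence; it relies essentially on the simplicity of $D_{ij}$ combined with the identification $e_i'D_{ij} \cong \tilde{T}_{ij}$, and one must verify that the action of $e_i'AuAe_i'$ on $e_i'D_{ij}$ really does coincide with the action of $AuA$ on that subspace, which holds because $e_i'$ acts as the identity on $e_i'D_{ij}$.
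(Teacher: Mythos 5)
Your proof is correct and follows the same strategy as the paper's: identify $\mathbf{J}(A)$ with the common annihilator of the simple modules $D_{ij}$ and translate $uD_{ij}=0$ into the condition $e_i'AuAe_i'\subseteq\mathrm{Ann}_{e_i'Ae_i'}(\tilde{T}_{ij})=kJ_{e_i}+\mathrm{Ann}_{k_\alpha\Gamma_{e_i}}(T_{ij})$, then intersect over $j$ and invoke Maschke for the second assertion. The only difference is that the paper quotes this translation from \cite[Lemma~5.6]{BDII}, whereas you supply a direct, self-contained proof of it via the condensation isomorphism $e_i'D_{ij}\cong\tilde{T}_{ij}$ and the simplicity of $D_{ij}$, which is a valid unwinding of the cited lemma.
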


\begin{proof}
It suffices to prove that the set on the right-hand side of (\ref{eqn radical})
is the common annihilator of the simple $A$-modules $D_{ij}$ 
($i=1,\ldots,n$, $j=1,\ldots,l_i$). So let $u\in A$, let 
$i\in\{1,\ldots,n\}$, and let $j\in\{1,\ldots,l_i\}$. By \cite[Lemma~5.6]{BDII}, we know that
$uD_{ij}=0$ if and only if $e_i'AuAe_i'\subseteq \mathrm{Ann}_{e_i'Ae_i'}(\tilde{T}_{ij})=kJ_{e_i}+\mathrm{Ann}_{k_\alpha \Gamma_{e_i}}(T_{ij}).$ Therefore,
\begin{align*}
u\in \mathbf{J}(A) &\Leftrightarrow \forall\, i=1,\ldots,n,\, j=1,\ldots, l_i\colon uD_{ij}=0\\
&\Leftrightarrow \forall\, i=1,\ldots,n\colon e_i'AuAe_i'\subseteq \bigcap_{j=1}^{l_i}(kJ_{e_i}+\mathrm{Ann}_{k_\alpha\Gamma_{e_i}}(T_{ij}))\\
&\quad \quad \quad\quad \quad \quad \quad \quad \quad\quad \quad \;\;=kJ_{e_i}+\bigcap_{j=1}^{l_i}\mathrm{Ann}_{k_\alpha\Gamma_{e_i}}(T_{ij})
=kJ_{e_i}+\mathbf{J}(k_\alpha\Gamma_{e_i})\,,
\end{align*}
proving (\ref{eqn radical}).
If, moreover $|\Gamma_{e_i}|\in k^\times$, for 
$i=1,\ldots,n$ then, by \cite[Exercise 28.4]{CR}, the twisted group algebras 
$k_\alpha\Gamma_{e_i}$
($i=1,\ldots,n$) are semisimple, and we derive
equation~(\ref{eqn radical2}).
\end{proof}


\section{A heredity chain for $k_\alpha\catC$}\label{sec qh}

In this section we will prove the main result, Theorem~\ref{thm qh}. We start by recalling the definition of a quasi-hereditary algebra.

\begin{definition}[Cline--Parshall--Scott \cite{CPS}, Section~3]\label{defi qh}
Let $k$ be any field. A finite-dimensional $k$-algebra $A$ is called
{\it quasi-hereditary} if there exists a chain
\begin{equation}\label{eqn chain}
\{0\}=J_0\subset J_1\subset\cdots\subset J_{n-1}\subset J_n=A
\end{equation}
of
two-sided ideals in $A$ such that, for every $i=1,\ldots,n$, 
when denoting by $\bar{\cdot}\colon A\to A/J_{i-1}=\bar{A}$
the canonical epimorphism, the following
conditions are satisfied:

\smallskip

(i)\, there is an idempotent $\bar{e_i}\in\bar{A}$ with $\bar{J}_i=\bar{A}\bar{e_i}\bar{A}$;

(ii)\, $\bar{J}_i\cdot \mathbf{J}(\bar{A})\cdot \bar{J}_i=\{0\}$;

(iii)\, $\bar{J}_i$ is a projective right $\bar{A}$-module.

\smallskip
\noindent
In this case one calls the chain (\ref{eqn chain}) a {\it heredity chain}
for $A$. Note that (iii) can be replaced by

(iii')\, $\bar{J}_i$ is a projective left $\bar{A}$-module,

\noindent
by \cite[Statement~7]{DR}.
\end{definition}

For the remainder of this section assume again that $\catC$ is a finite split category. For ease of notation we denote from now on the morphism set $\Mor(\catC)$ by $S$.
Recall from Section~\ref{sec notation} that we also have the notions of 
left/right/two-sided ideals  
of the category $\catC$. So we will now use the term `ideal' both in
the context of categories and algebras.

We will next define a chain of two-sided ideals of $\catC$ that will then
give rise to
a heredity chain for the twisted category algebra in Theorem~\ref{thm qh}.

\begin{definition}\label{defi ideals S}
Let $e_1,\ldots,e_n$ be representatives of the equivalence
classes of idempotents of $\catC$, ordered such that
$$\scrJ(e_i)\leq_\scrJ \scrJ(e_j)\quad \text{ implies }\quad i\leq j\,,$$
in which case we also write $i\leq_\scrJ j$. Moreover, for $i=1,\ldots,n$, we
define
$$S_i:=\scrJ(e_i),\quad S_{\leq_\scrJ i}:=\bigdisj_{j\leq_\scrJ i} S_j,\quad S_{\leq i}:=\bigdisj_{j\leq i} S_j\,.$$
Then $S_n=S$, and for convenience we also set $S_0:=S_{\leq 0}:=\emptyset\subseteq S$.
Note that, by \cite[Lemma~2.6]{LS}, one has 
\begin{equation}\label{eqn Gamma J}
  \Gamma_e = (e\circ S\circ e) \cap S_i \quad\text{and}\quad J_e = (e\circ S\circ e)\cap S_{\le i-1}\,,
\end{equation}
for all $i\in\{1,\ldots,n\}$ and any idempotent $e\in S_i$.
\end{definition}

\begin{proposition}\label{prop ideals S}
With the notation as in Definition~\ref{defi ideals S}, for
$i=1,\ldots,n$, both $S_{\leq_\scrJ i}$ and $S_{\leq i}$ are ideals
of $\catC$. 
\end{proposition}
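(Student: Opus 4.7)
The plan is to observe that both statements follow almost directly from the definition of the $\scrJ$-preorder and the way the indexing was chosen. Recall that $\scrJ$-classes partition $\Mor(\catC)$ and, by \ref{noth J-classes}(d), there are exactly $n$ of them, so each $\scrJ$-class equals $\scrJ(e_j) = S_j$ for a unique $j \in \{1, \ldots, n\}$. Hence every $s \in \Mor(\catC)$ lies in exactly one $S_j$, and $s \in S_{\leq_\scrJ i}$ iff $\scrJ(s) \leq_\scrJ \scrJ(e_i)$, while $s \in S_{\leq i}$ iff the unique $j$ with $s \in S_j$ satisfies $j \leq i$.

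First I would handle $S_{\leq_\scrJ i}$. Let $s \in S_{\leq_\scrJ i}$ and let $u, v \in \Mor(\catC)$ be such that $u \circ s$ (respectively $s \circ v$) exists. From the definition of $\leq_\scrJ$ in \ref{noth J-classes}(b), one immediately has
\[
\Mor(\catC) \circ (u \circ s) \circ \Mor(\catC) \subseteq \Mor(\catC) \circ s \circ \Mor(\catC) \subseteq \Mor(\catC) \circ e_i \circ \Mor(\catC),
\]
so $\scrJ(u \circ s) \leq_\scrJ \scrJ(e_i)$; symmetrically for $s \circ v$. Writing $\scrJ(u \circ s) = \scrJ(e_m)$ for the unique $m$ with this property, we get $m \leq_\scrJ i$, and therefore $u \circ s \in S_m \subseteq S_{\leq_\scrJ i}$. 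Thus $S_{\leq_\scrJ i}$ is a two-sided ideal of $\catC$.

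For $S_{\leq i}$ the argument is the same, using that the linear order $\leq$ on $\{1, \ldots, n\}$ was chosen (in Definition~\ref{defi ideals S}) to refine $\leq_\scrJ$. Explicitly, if $s \in S_j$ for some $j \leq i$ and $u \circ s$ exists, then by the preceding paragraph $u \circ s \in S_m$ where $m \leq_\scrJ j$; the choice of ordering then forces $m \leq j \leq i$, hence $u \circ s \in S_{\leq i}$. The case of right multiplication is analogous.

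There is no substantive obstacle: the whole statement is essentially bookkeeping about the $\scrJ$-preorder together with the remark that a linear extension of a poset preserves downward-closedness of principal down-sets. The only point that requires a moment's care is the uniqueness of the $\scrJ$-class containing a given morphism, which is what allows one to translate the $\scrJ$-preorder condition on a morphism into a numerical condition on its index.
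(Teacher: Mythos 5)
Your proof is correct and takes essentially the same approach as the paper: the key step in both is that $\scrJ(u\circ s)\leq_\scrJ\scrJ(s)$ and $\scrJ(s\circ v)\leq_\scrJ\scrJ(s)$ whenever the composites exist, from which $S_{\leq_\scrJ i}$ is an ideal, and then the compatibility of the chosen linear order with $\leq_\scrJ$ gives the same for $S_{\leq i}$. The paper passes through the identity $S_{\leq i}=\bigcup_{j\leq i}S_{\leq_\scrJ j}$ rather than arguing directly on an element $s\in S_j$, but this is only a cosmetic difference.
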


\begin{proof}
Let $i\in\{1,\ldots,n\}$, let $s\in S_i$, and let $u,v\in S$ be such
that $s\circ u$ and $v\circ s$ exist. Since 
$S\circ s\circ u\circ S\subseteq S\circ s\circ S$ and 
$S\circ v\circ s\circ S\subseteq S\circ s\circ S$, we immediately get
$\scrJ(s\circ u)\leq_\scrJ \scrJ(s)$ and $\scrJ(v\circ s)\leq_\scrJ \scrJ(s)$.
Thus $S_{\leq_{\scrJ}i}$ is an ideal of $\catC$, for $i=1,\ldots,n$, and
since $S_{\leq i}=\bigcup_{j\leq i}S_{\leq_\scrJ j}$, the latter is an ideal 
of $\catC$ as well.
\end{proof}

\begin{proposition}\label{prop right ideals}
{\rm (a)}\, Let $s,t\in S$ be such that $s=s\circ t\circ s$. 
Then $S\circ s=S\circ t\circ s$.

\smallskip
{\rm (b)}\, Let $i\in\{1,\ldots,n\}$, and let
$s,t\in S_i$ be such that $S\circ s\subseteq S\circ t$. 
Then $S\circ s=S\circ t$.

\smallskip
{\rm (c)}\, Let $i\in\{1,\ldots,n\}$, and let $s,t\in S_i$. Then
the sets $(S\circ s)_i:=(S\circ s)\cap S_i$ 
and $(S\circ t)_i:=(S\circ t)\cap S_i$ are either
equal of disjoint.

\smallskip
{\rm (d)}\, Let $i\in\{1,\ldots,n\}$.
There is a subset $\epsilon$ of $[e_i]$ such that the 
sets $(S\circ e)_i:=(S\circ e)\cap S_i$ ($e\in \epsilon$)
form a partition of $S_i$.
\end{proposition}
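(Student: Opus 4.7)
The plan is to prove the four parts in order, with~(b) carrying the main work and~(c), (d) deduced from it. For~(a), the inclusion $S\circ t\circ s\subseteq S\circ s$ is immediate, and for the converse any $u\circ s\in S\circ s$ may be rewritten, using $s=s\circ t\circ s$, as $u\circ s=u\circ(s\circ t\circ s)=(u\circ s)\circ t\circ s\in S\circ t\circ s$.

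For~(b), the hypothesis $S\circ s\subseteq S\circ t$ gives $s=u\circ t$ for some $u\in S$, and $s\,\scrJ\,t$ in particular gives $t\in S\circ s\circ S$, so $t=a\circ s\circ b$ for some $a,b\in S$. A source/target check then shows $b\in\End_\catC(X)$, where $X:=\text{source}(s)=\text{source}(t)$, and $a\circ u\in\End_\catC(Y)$, where $Y:=\text{target}(t)$. Substituting $s=u\circ t$ into $t=a\circ s\circ b$ yields $t=(a\circ u)\circ t\circ b$, whence by iteration $t=(a\circ u)^n\circ t\circ b^n$ for every $n\ge 1$. Since $\End_\catC(X)$ and $\End_\catC(Y)$ are finite monoids, there is $N\ge 1$ such that both $p:=(a\circ u)^N$ and $b^N$ are idempotent. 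From $t=p\circ t\circ b^N$ one deduces $p\circ t=p\circ p\circ t\circ b^N=p\circ t\circ b^N=t$, and using the identity $(a\circ u)^N=a\circ(u\circ a)^{N-1}\circ u$ together with $u\circ t=s$, we conclude $t=p\circ t=a\circ(u\circ a)^{N-1}\circ s\in S\circ s$, which proves $S\circ t\subseteq S\circ s$ and hence~(b).

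For~(c), if $r\in(S\circ s)_i\cap(S\circ t)_i$, then $r\in S_i$, and $r\in S\circ s$ gives $S\circ r\subseteq S\circ s$; part~(b) (applied to $r,s\in S_i$) forces $S\circ r=S\circ s$, and analogously $S\circ r=S\circ t$, so $(S\circ s)_i=(S\circ t)_i$. For~(d), take any $s\in S_i$ and a split partner $s^*\in S$ with $s=s\circ s^*\circ s$. Then $f:=s^*\circ s$ is an idempotent with $s=s\circ f\in S\circ f$, and Lemma~\ref{lemma id equiv} gives $\scrJ(f)=\scrJ(s)=S_i$, so $f\in[e_i]$ by the bijection in~\ref{noth J-classes}(d) and $s\in(S\circ f)_i$. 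Hence the family $\{(S\circ e)_i:e\in[e_i]\}$ covers $S_i$, and by~(c) any two of these sets are either equal or disjoint; choosing $\epsilon\subseteq[e_i]$ to contain exactly one representative from each block of the equivalence relation ``$(S\circ e)_i=(S\circ e')_i$'' on $[e_i]$ produces the desired partition.

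The main obstacle is~(b), where only the one-sided inclusion $S\circ s\subseteq S\circ t$ is given and must be promoted to equality using the $\scrJ$-relation. The key ideas are the iteration $t=(a\circ u)^n\circ t\circ b^n$ and the fact that every element of a finite monoid admits an idempotent power, which together allow us to absorb the right $b$-factor without needing to invert it. Neither the field $k$ nor the $2$-cocycle $\alpha$ plays a role in this proposition.
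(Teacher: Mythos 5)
Your proof is correct.  Parts (a), (c), and (d) run essentially as the paper does: for (a) the two-sided inclusion argument is identical; for (c) you apply (b) to a common element of the intersection exactly as the paper does; for (d) both arguments reduce the covering family from $\{(S\circ s)_i : s\in S_i\}$ to $\{(S\circ e)_i : e\in [e_i]\}$ (you via a split partner $s^*$ and Lemma~\ref{lemma id equiv}, the paper via part~(a) and Lemma~\ref{lemma id equiv}) and then invoke (c) to choose representatives.

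Your route through (b), however, is genuinely different.  The paper first reduces to idempotents $e:=q\circ s$ and $f:=r\circ t$ living in a single $\End_\catC(X)$ (using~(a) to pass from $S\circ s\subseteq S\circ t$ to $S\circ e\subseteq S\circ f$), then invokes Lemma~\ref{lemma id equiv} to get $e\sim f$ and hence elements $u\in e\circ S\circ f$, $v\in f\circ S\circ e$ with $e=u\circ v$, $f=v\circ u$, and finally uses the finiteness of $\End_\catC(X)$ to show $v^a=f$ and conclude $f\in S\circ e$.  You instead stay with $s,t$ directly: from $S\circ s\subseteq S\circ t$ you get $s=u\circ t$, from $s\,\scrJ\,t$ you get $t=a\circ s\circ b$, and then the iteration $t=(a\circ u)^n\circ t\circ b^n$ together with idempotent powers in the finite monoids $\End_\catC(Y)$ and $\End_\catC(X)$ lets you absorb the $b^N$-factor and lands $t\in S\circ s$.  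Your version avoids the detour through the idempotent equivalence $\sim$ in part~(b) (deferring Lemma~\ref{lemma id equiv} entirely to part~(d)), at the cost of a slightly more involved bookkeeping of sources and targets; the paper's version packages that bookkeeping into the idempotents $e,f$ in a single endomorphism monoid.  Both are clean and correct; the ideas (a $\scrJ$-factorization plus idempotent powers in a finite monoid) are the same at heart.
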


\begin{proof}
(a) This follows from 
$S\circ s=S\circ s\circ t \circ s\subseteq S\circ t\circ s \subseteq S\circ s$.

\smallskip
(b) Let $q,r\in S$ be such that $s\circ q \circ s=s$ and
$t\circ r\circ t=t$, and 
set $e:=q\circ s$ and $f:=r\circ t$. 
Since $s\in S\circ t$, the idempotents $e$ and $f$ are 
endomorphisms of the same object of $\catC$, say $X$. By Part~(a), we 
have $S\circ e\subseteq S\circ f$, and it suffices to show that 
$S\circ f\subseteq S\circ e$. Recall that $\scrJ(s)=S_i=\scrJ(t)$.
Thus, by Lemma~\ref{lemma id equiv}, we have $e\sim f$, so that
there exist $u\in e\circ S\circ f$ and $v\in f\circ S\circ e$ 
with $e=u\circ v$ and $f=v\circ u$. Since $S\circ e\subseteq S\circ f$, we 
also have $e=e\circ f$. Note that  $u$ and $v$ are endomorphisms of $X$. 
Since $\End_{\catC}(X)$ is finite,
there exist positive integers $a$ and $b$ such that 
$v^{a+b}=v^b$. Composition with $u^b$ from the right yields $v^a=f$, since 
we have $v\circ u=f$ and $v\circ f=v\circ e\circ f=v\circ e=v$. Finally, we 
obtain $f\circ e=v^a\circ e=v^a=f$, which implies that $f\in S\circ e$ and 
$S\circ f\subseteq S\circ e$. 

\smallskip
(c) Assume that $(S\circ s)_i\cap (S\circ t)_i$ is non-empty and that 
$u\in(S\circ s)_i\cap(S\circ t)_i$. 
Then we obtain $S\circ u\subseteq S\circ s$ and $S\circ u\subseteq S\circ t$. 
Now Part~(b) yields $S\circ s=S\circ u=S\circ t$ and 
$(S\circ s)_i=(S\circ t)_i$.

\smallskip
(d) Clearly, $S_i$ is the union of its subsets $(S\circ s)_i$, $s\in S_i$, 
and by Part~(a) also of the subsets $(S\circ e)_i$, $e\in [e_i]$. 
The condition $(S\circ e)_i = (S\circ f)_i$ defines an equivalence relation on 
the set $[e_i]$. 
If $\epsilon$ is a set of representatives of the corresponding
equivalence classes
then, by Part~(c), $S_i$ is the disjoint union of the subsets 
$(S\circ e)_i$, $e\in\epsilon$.
\end{proof}

\begin{theorem}\label{thm qh}
Let $\catC$ be a finite split category and let $\alpha$ be a $2$-cocycle of $\catC$ with values in
the multiplicative group $k^\times$ of a field $k$. Assume further that, for each idempotent $e$ of $\catC$, 
the order of $\Gamma_e$ is invertible in $k$.
With the notation as in Definition~\ref{defi ideals S}, let 
$J_i:=kS_{\leq i}$, for $i=0,\ldots,n$. Then
\begin{equation}\label{eqn qh chain}
\{0\}=J_0\subset J_1\subset\cdots \subset J_n=k_\alpha\catC
\end{equation}
is a heredity chain for $k_\alpha\catC$. In particular, the twisted category algebra $k_\alpha\catC$ is quasi-hereditary.
\end{theorem}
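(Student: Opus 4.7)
The plan is to verify conditions (i), (ii), (iii') of Definition~\ref{defi qh} for the chain $(J_i)_{0 \le i \le n}$, taking as heredity idempotent in $\bar A = A/J_{i-1}$ the image $\bar e_i'$ of $e_i' = \alpha(e_i,e_i)^{-1}e_i$. First observe that each $J_i = kS_{\le i}$ is a two-sided ideal of $A$, since $S_{\le i}$ is a two-sided ideal of $\catC$ by Proposition~\ref{prop ideals S}; the inclusions in~(\ref{eqn qh chain}) are strict because $e_i \in S_i \smallsetminus S_{\le i-1}$.

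For condition~(i) I would prove $\bar A \bar e_i' \bar A = \bar J_i$ by a direct calculation in $A$. Any non-zero product $s \cdot e_i' \cdot t$ with $s,t \in \Mor(\catC)$ is a scalar multiple of the composite $s \circ e_i \circ t \in \Mor(\catC) \circ e_i \circ \Mor(\catC) \subseteq S_{\le_\scrJ i} \subseteq S_{\le i}$, giving $A e_i' A \subseteq J_i$. Conversely, each $r \in S_i = \scrJ(e_i)$ satisfies $\Mor(\catC) \circ r \circ \Mor(\catC) = \Mor(\catC) \circ e_i \circ \Mor(\catC)$, and since $r$ itself belongs to its own two-sided orbit (via identity morphisms), we have $r = u \circ e_i \circ v$ for suitable $u,v \in \Mor(\catC)$; hence $r$ is a $k^\times$-multiple of $u \cdot e_i' \cdot v$ and lies in $A e_i' A$. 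As $J_i/J_{i-1}$ has $k$-basis the image of $S_i$, condition~(i) follows.

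For condition~(ii) I would use (i) to write $\bar J_i \mathbf{J}(\bar A) \bar J_i \subseteq \bar A\,(\bar e_i' \mathbf{J}(\bar A) \bar e_i')\,\bar A$, reducing the task to showing $\bar e_i' \mathbf{J}(\bar A) \bar e_i' = 0$. Combining the decomposition~(\ref{eqn Gamma-J-decomposition}) with the identity $J_{e_i} = (e_i \circ S \circ e_i) \cap S_{\le i-1}$ from~(\ref{eqn Gamma J}) gives $e_i' A e_i' \cap J_{i-1} = k J_{e_i}$, and therefore $\bar e_i' \bar A \bar e_i' \cong k_\alpha \Gamma_{e_i}$ as $k$-algebras. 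The hypothesis $|\Gamma_{e_i}| \in k^\times$ together with Maschke's theorem for twisted group algebras make this semisimple, and the standard identity $\bar e_i' \mathbf{J}(\bar A) \bar e_i' = \mathbf{J}(\bar e_i' \bar A \bar e_i')$ finishes the argument.

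For condition~(iii') I would apply Proposition~\ref{prop right ideals}(d) to fix a subset $\epsilon \subseteq [e_i]$ with $S_i = \bigdisj_{e \in \epsilon} (S \circ e)_i$, and then establish
\[
  \bar J_i \;=\; \bigoplus_{e \in \epsilon} \bar A \bar e'
\]
as left $\bar A$-modules. For each $e \in \epsilon$, the image $\bar e'$ of the idempotent $e' = \alpha(e,e)^{-1} e$ is an idempotent in $\bar A$, so $\bar A \bar e'$ is a projective left $\bar A$-module as a direct summand of the regular module. Since $A e'$ has $k$-basis $S \circ e$ and $(S \circ e) \smallsetminus (S \circ e)_i \subseteq S_{\le i-1}$, the module $\bar A \bar e'$ has $k$-basis the image of $(S \circ e)_i$, and lies inside $\bar J_i$ because $e \in S_i$. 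Summing over $\epsilon$ and comparing dimensions, $\sum_{e \in \epsilon} |(S \circ e)_i| = |S_i| = \dim_k \bar J_i$, forces the sum to be direct and exhaustive. I expect this last step to be the main obstacle: it is the one place where the combinatorial content of Proposition~\ref{prop right ideals}(d) is indispensable, since without the partition of $S_i$ one would only know that the modules $\bar A \bar e'$ generate $\bar J_i$, not that they decompose it.
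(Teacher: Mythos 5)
Your proof is correct, and for conditions (i) and (iii') it follows essentially the same route as the paper: condition (i) is verified by the same observation that $S_i\subseteq S\circ e_i\circ S$ (you add the converse inclusion $r\in\Mor(\catC)\circ e_i\circ\Mor(\catC)$ explicitly, which is fine), and condition (iii') by decomposing $\bar J_i$ using the partition from Proposition~\ref{prop right ideals}(d), with your basis-and-dimension verification just supplying detail the paper omits.

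The genuine difference is in condition (ii). The paper's proof is element-wise: it fixes $s,t\in S_i$ and $u\in\mathbf{J}(A)$, manufactures morphisms $x,y,v,w$ via Lemma~\ref{lemma id equiv} realizing the equivalence of $q\circ s$, $t\circ r$ with $e_i$, and then invokes the description of the Jacobson radical from Proposition~\ref{prop radical} to place $(e_i\circ y)u(v\circ e_i)$ in $kJ_{e_i}\subseteq J_{i-1}$, whence $sut\in J_{i-1}$. You instead pass entirely to the quotient $\bar A$, use condition (i) to get $\bar J_i\mathbf{J}(\bar A)\bar J_i=\bar A\bigl(\bar e_i'\mathbf{J}(\bar A)\bar e_i'\bigr)\bar A$, identify $\bar e_i'\bar A\bar e_i'\cong e_i'Ae_i'/kJ_{e_i}\cong k_\alpha\Gamma_{e_i}$ via (\ref{eqn Gamma-J-decomposition}) and (\ref{eqn Gamma J}), and conclude from Maschke and the identity $\bar e_i'\mathbf{J}(\bar A)\bar e_i'=\mathbf{J}(\bar e_i'\bar A\bar e_i')$ that this corner is zero. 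Your route bypasses Proposition~\ref{prop radical} and Lemma~\ref{lemma id equiv} entirely; it is shorter and more structural, resting only on the standard idempotent-corner facts. What the paper's version buys in exchange is that it exhibits the role of Proposition~\ref{prop radical} (whose explicit description of $\mathbf{J}(A)$ the authors highlight as a feature of their approach in the introduction), whereas your argument shows that for the purpose of this theorem the full radical description is not needed.
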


\begin{proof}
We set $A:=k_\alpha\catC$. Since $S_{\le i}$ is an ideal of $S$, $J_i$ is an ideal of $A$ for all $i=0,\ldots,n$.
We show that the chain (\ref{eqn qh chain})
satisfies conditions (i), (ii) and (iii') in Definition~\ref{defi qh}. To this end,
let $i\in\{1,\ldots,n\}$, and again let $\bar{\cdot}\colon A\to A/J_{i-1}$
denote the canonical epimorphism. 

\smallskip
By definition, we have $S\circ s\circ S= S\circ e_i\circ S$, for 
every $s\in S_i$, thus $S_i\subseteq S\circ e_i\circ S$.
From this we get $\bar{J}_i=\bar{A}\bar{e}_i\bar{A}$, and we have verified
condition~(i).

\smallskip 
Next we verify condition~(ii). 
Note that, since $A$ is a finite-dimensional
algebra over a field, we have $\overline{\mathbf{J}(A)}=\mathbf{J}(\bar{A})$.
Hence it suffices to show that
$sut\in J_{i-1}$, for all
$s,t\in S_{\leq i}$ and all
$u\in\mathbf{J}(A)$. If $s\in S_{\leq i-1}$ or $t\in S_{\leq i-1}$ then this
is clearly true. So we may suppose that $s,t\in S_i$. Let $q,r\in S$ be such that
$s\circ q\circ s=s$ and $t\circ r\circ t=t$. Then 
$S\circ s\circ S=S\circ e_i\circ S=S\circ t\circ S$ and 
$[e_i]=[s\circ q]=[q\circ s]=[t\circ r]=[r\circ t]$, by
Lemma~\ref{lemma id equiv}.
So there exist elements $x\in q\circ s\circ S\circ e_i$, 
$y\in e_i\circ S\circ q\circ s$,
$v\in t\circ r\circ S\circ e_i$, and $w\in e_i\circ S\circ t\circ r$ such that
$$q\circ s=x\circ y=x\circ e_i\circ y,\quad t\circ r=v\circ w=v\circ e_i\circ w,\quad e_i=y\circ x=w\circ v\,.$$
Since $u\in\mathbf{J}(A)$, Proposition~\ref{prop radical}
implies $(e_i\circ y)u(v\circ e_i)\in e'_iAuAe'_i\subseteq kJ_{e_i}$.
Furthermore, we have $e_i\circ S\circ e_i\subseteq S_{\leq i}$, since 
$e_i\in S_i\subseteq S_{\leq i}$ and
since, by Proposition~\ref{prop ideals S}, 
$S_{\leq i}$ is an ideal in $S$. By (\ref{eqn Gamma J}) we have 
$kJ_{e_i}\subseteq J_{i-1}$. This implies $(e_i\circ y)u(v\circ e_i)\in J_{i-1}$ and
we obtain
$$su t=(s\circ q\circ s)u(t\circ r\circ t)=(s\circ x\circ e_i\circ y)u(v\circ e_i\circ w\circ t)\in
J_{i-1}\,,$$
as required.

\smallskip
It remains to verify condition~(iii'). By 
Proposition~\ref{prop right ideals}(d), we know that
$\bar{J}_i$ is the direct sum of left ideals of the form
$\bar{A}\bar{e}$, for suitable idempotents $e\in A$. Since 
each such summand is a projective left $\bar{A}$-module, so is $\bar{J}_i$,
and the proof of (iii') is complete.
\end{proof}


\section{Standard modules for $k_\alpha\catC$}\label{sec std mod}

As before, we denote the twisted category algebra $k_\alpha\catC$ by $A$. As in Theorem~\ref{thm qh} we assume throughout this section that $|\Gamma_e|\in k^\times$ for each idempotent $e$ of $\catC$.

\begin{nothing} {\bf The modules $\Delta_{ir}$.}\label{noth Delta}\, Recall from \ref{not simples} that the heads of the $A$-modules
\begin{equation*}
  \Delta_{ir}:=Ae_i'\otimes_{e_i'Ae_i'}\tilde{T}_{ir}\quad (i\in\{1,\ldots,n\},r\in\{1,\ldots,l_i\}) 
\end{equation*}
yield a set of representatives of the isomorphism classes of simple $A$-modules.
Here, $T_{i1},\ldots,T_{il_i}$ again denote representatives
of the isomorphism classes of simple $k_\alpha\Gamma_{e_i}$-modules.

From now on we set $\Lambda:=\{(i,r)\mid 1\leq i\leq n,\, 1\leq r\leq l_i\}$,
and we define a partial order $\leq$ on $\Lambda$ via
\begin{equation}\label{eqn lambda poset}
(i,r)<(j,s):\Leftrightarrow S_j<_\scrJ S_i\,.
\end{equation}
\end{nothing}

The aim of this section is to prove the following theorem.

\begin{theorem}\label{thm std modules}
The modules $\Delta_{ir}$ ($(i,r)\in\Lambda$) are the standard modules
of the quasi-hereditary algebra $A$ with respect to the partial order $\le$ on $\Lambda$.
\end{theorem}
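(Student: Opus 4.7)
The plan is to identify $\Delta_{ir}$ with the standard module produced by the general theory of split heredity ideals applied to the chain of Theorem~\ref{thm qh}, and then to verify the finer composition-factor condition for the $\scrJ$-order on $\Lambda$.

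Set $\bar A := A/J_{i-1}$ and let $\bar e_i$ denote the image of $e_i'$. From (\ref{eqn Gamma J}) one computes that $e_i' A e_i' \cap J_{i-1} = kJ_{e_i}$, whence $\bar e_i \bar A \bar e_i \cong k_\alpha \Gamma_{e_i}$, which is semisimple by hypothesis. Thus $\bar A \bar e_i \bar A$ is a split heredity ideal in $\bar A$, and by the general theory of quasi-hereditary algebras (Cline--Parshall--Scott; Dlab--Ringel) the standard module for $D_{ir}$ with respect to the linear-extension order on $\Lambda$ coming from the chain is $\bar A \bar e_i \otimes_{k_\alpha \Gamma_{e_i}} T_{ir}$. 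To match this with $\Delta_{ir}$: since $kJ_{e_i}$ acts trivially on $\tilde T_{ir}$, one has $\Delta_{ir} = (Ae_i'/Ae_i' \cdot kJ_{e_i}) \otimes_{k_\alpha \Gamma_{e_i}} T_{ir}$, so it suffices to establish the identity $Ae_i' \cdot kJ_{e_i} = Ae_i' \cap J_{i-1}$. The inclusion "$\subseteq$" is immediate. For "$\supseteq$", a basis element of $Ae_i' \cap J_{i-1}$ is a morphism $u \in S$ with $u \circ e_i = u$ and $\scrJ(u) <_\scrJ S_i$. By splitness, pick $t \in S$ with $u \circ t \circ u = u$ and set $\hat e := e_i \circ t \circ u$. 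A direct check shows that $\hat e$ is an idempotent in $e_i \circ \End_\catC(X_i) \circ e_i$ with $\scrJ(\hat e) <_\scrJ S_i$, whence $\hat e \in J_{e_i}$, and $u \circ \hat e = u$; after a cocycle adjustment, this exhibits $u$ in $Ae_i' \cdot kJ_{e_i}$.

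To upgrade from the linear-extension order to the $\scrJ$-order, I would next show that the composition factors of $\Delta_{ir}$ satisfy the $\scrJ$-condition. Introduce the two-sided ideal $I_i := kS_{\not\ge_\scrJ i}$ (two-sided by an argument analogous to Proposition~\ref{prop ideals S}) and verify, as in the proof of Theorem~\ref{thm qh}, that the simples of $A/I_i$ are exactly $\{D_{js} : j \ge_\scrJ i\}$. The annihilation $I_i \cdot \Delta_{ir} = 0$ then follows from the observation that for $u \in S$ with $\scrJ(u) \not\ge_\scrJ S_i$ and $s \in (S \circ e_i)_i$, whenever $u \circ s$ is defined the inequalities $\scrJ(u \circ s) \le_\scrJ \scrJ(u)$ and $\scrJ(u \circ s) \le_\scrJ S_i$, together with $\scrJ(u) \not\ge_\scrJ S_i$, force $\scrJ(u \circ s) <_\scrJ S_i$, and the key identity of the previous paragraph gives $(u \circ s) \otimes v = 0$. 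To exclude $D_{is}$ with $s \neq r$, apply the exact Schur functor $M \mapsto e_i' M$: since $e_i' \Delta_{ir} = \tilde T_{ir}$ is simple and $e_i' D_{is} = \tilde T_{is}$, a composition factor $D_{is}$ with $s \neq r$ would contribute the non-isomorphic simple $\tilde T_{is}$ to the composition factors of $\tilde T_{ir}$, a contradiction. Maximality with respect to the $\scrJ$-order is then automatic: any quotient of the projective cover $P_{ir}$ satisfying the $\scrJ$-condition also satisfies the coarser linear-extension condition and is therefore a quotient of $\Delta_{ir}$.

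I expect the main technical hurdle to be the splitness-driven identity $Ae_i' \cdot kJ_{e_i} = Ae_i' \cap J_{i-1}$, which is the key use of the splitness hypothesis and is also needed (implicitly) for the annihilation $I_i \cdot \Delta_{ir} = 0$.
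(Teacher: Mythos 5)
Your proposal is correct and arrives at the same theorem by a genuinely different route from the paper. The central technical ingredient is the same: your identity $Ae_i'\cdot kJ_{e_i}=Ae_i'\cap J_{i-1}$ is equivalent to the paper's equation $S_{\leq i-1}\cap S\circ e = S\circ J_e$ (the first display in the proof of Lemma~\ref{lemma ideal quot}), and both are proved by the same splitness argument (your $\hat e = e_i\circ t\circ u$ is the paper's $e\circ y\circ x\circ e$). From there, however, the two proofs diverge. The paper proves Lemma~\ref{lemma ideal quot} directly by decomposing $J_i/J_{i-1}$ as a sum of cyclic modules $Ae'/AJ_e$ indexed by the partition of $S_i$ from Proposition~\ref{prop right ideals}(d), and then verifies conditions (i)--(iv) of Remark~\ref{rem strategy} by hand, including an explicit filtration of $P_{ir}=Af_{ir}$ by the ideals $J_jf_{ir}$. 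You instead quote the general Cline--Parshall--Scott/Dlab--Ringel theory of split heredity ideals to identify $\Delta_{ir}$ immediately as the standard module for the linear-extension order on $\Lambda$ coming from the chain in Theorem~\ref{thm qh}, and then \emph{refine} the order. Your refinement step also differs in detail from the paper's: the paper proves Remark~\ref{rem strategy}(ii) by annihilating $\Delta_{ir}$ with $S_j$ whenever $S_i\not<_\scrJ S_j$ and invoking \cite[Proposition~5.1]{LS}; you instead annihilate $\Delta_{ir}$ by the two-sided ideal $I_i=kS_{\not\geq_\scrJ i}$ and then dispose of the case $j=i$, $s\neq r$ separately via the exact Schur functor $M\mapsto e_i'M$. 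The Schur-functor argument is a clean way to isolate the within-$\scrJ$-class case (which the paper's annihilation step $S_j\circ S_{\leq i}\subseteq S_{\leq i-1}$ only handles directly when $j\neq i$). What the paper's hands-on route buys is the explicit $\Delta$-filtration of $P_{ir}$ and the multiplicity formula in~(\ref{eqn nir}), both of which are of independent interest; what your route buys is brevity, at the price of outsourcing the filtration statement to the black box of the general theory. Note also that your upgrade from the finer linear order to the coarser $\scrJ$-order is exactly the reverse direction to the observation in Remark~\ref{rem strategy}(b), where the paper records that its standard modules for the $\scrJ$-order are automatically standard modules for the finer chain order; both directions are legitimate once one knows that the standard modules for the finer order already satisfy the composition-factor condition for the coarser one.
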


\begin{remark}\label{rem strategy}
(a) In order to prove Theorem~\ref{thm std modules}, we will 
have to show that,
for each $(i,r)\in\Lambda$, the
$A$-module $\Delta_{ir}$ 
is the unique maximal quotient module  $M$ of the projective cover $P_{ir}$ of $D_{ir}$
such that all composition factors of $\mathrm{Rad}(M)$ belong to
the set $\{D_{js}\mid (j,s)<(i,r)\}$ (see the definition of a standard module with respect to the partial order $\le$ in \cite[A1]{D}).

Note that it thus suffices to show that, for each $(i,r)\in\Lambda$, the
module $\Delta_{ir}$
satisfies conditions
(i) and (ii) below, and the projective
cover $P_{ir}$ of $D_{ir}$ admits a filtration
\begin{equation}\label{eqn proj filtration}
0=P_{ir}^{(0)}\subset \cdots\subset P_{ir}^{(m_{ir})}=P_{ir}
\end{equation}
satisfying (iii) and (iv) below:

\smallskip
(i)\, $\Hd(\Delta_{ir})\cong D_{ir}$;

\smallskip
(ii)\, if $(j,s)\in\Lambda$ is such that $D_{js}$ occurs as a
composition factor of $\mathrm{Rad}(\Delta_{ir})$ then $(j,s)<(i,r)$;

\smallskip
(iii)\, $P_{ir}^{(m_{ir})}/P_{ir}^{(m_{ir}-1)}\cong \Delta_{ir}$;

\smallskip
(iv)\, for $q\in\{1,\ldots,m_{ir}-1\}$, one has $P_{ir}^{(q)}/P_{ir}^{(q-1)}\cong \Delta_{j_q,s_q}$, for some $(j_q,s_q)\in\Lambda$ with $(i,r)<(j_q,s_q)$.

\smallskip
(b) Note also that the partial order on $\Lambda$ defined in \cite[Proposition~A3.7(ii)]{D} from the heredity chain~(\ref{eqn qh chain}) in Theorem~\ref{thm qh} is finer than the partial order defined in (\ref{eqn lambda poset}). Thus, the modules $\Delta_{ir}$ are also the standard modules associated with the heredity chain in~(\ref{eqn qh chain}).
\end{remark}

We start out with the following lemma, which will be essential for
verifying conditions (i)--(iv) above. As in Theorem~\ref{thm qh}, given
$i\in\{1,\ldots n\}$, we
denote by $J_i$ the ideal $kS_{\leq i}$ of $A$, and we also set $J_0:=\{0\}$.

\begin{lemma}\label{lemma ideal quot}
Let $i\in\{1,\ldots,n\}$, and let $\epsilon_i\subseteq[e_i]\subseteq S_i$
be a set of idempotents such that the sets $(S\circ e)_i:=(S\circ e)\cap S_i$
form a partition of $S_i$ (cf.~Proposition~\ref{prop right ideals}).
Then one has a left $A$-module isomorphism
\begin{equation}\label{eqn nir}
J_i/J_{i-1}\cong \bigoplus_{r=1}^{l_i}\Delta_{ir}^{|\epsilon_i|\cdot n_{ir}}\quad \text{ with }\quad n_{ir}:=\frac{\dim_k(T_{ir})}{\dim_k(\End_{k_\alpha\Gamma_{e_i}}(T_{ir}))}\,.
\end{equation}
\end{lemma}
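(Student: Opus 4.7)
The plan is to decompose $J_i/J_{i-1}$ using the partition $S_i = \bigdisj_{e\in\epsilon_i}(S\circ e)_i$ from Proposition~\ref{prop right ideals}(d), and to recognize each piece as a tensor-product module of the form appearing in the definition of the $\Delta_{ir}$. First I would observe that $J_i/J_{i-1}$ has $\{\bar{s}\mid s\in S_i\}$ as a $k$-basis, and that for each $e\in\epsilon_i$ the left $A$-module $Ae'$ coincides with $k(S\circ e)$ inside $A$, since for $s\in S$ one has $s\cdot e'\in k^{\times}\cdot(s\circ e)$ when $s\circ e$ is defined, and $s\cdot e'=0$ otherwise. Because $S\circ e\subseteq S_{\le i}$, this yields $Ae'\cap J_{i-1}=k((S\circ e)\cap S_{\le i-1})$, and the canonical map $\phi_e\colon Ae'\to J_i/J_{i-1}$, $x\mapsto x+J_{i-1}$, descends to an $A$-linear injection $Ae'/(Ae'\cap J_{i-1})\hookrightarrow J_i/J_{i-1}$ whose image is spanned by the classes of the elements of $(S\circ e)_i$. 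Since $\epsilon_i$ partitions $S_i$, these maps assemble into an $A$-linear isomorphism
$$\bigoplus_{e\in\epsilon_i}Ae'/(Ae'\cap J_{i-1})\;\liso\;J_i/J_{i-1}\,.$$

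The core of the argument is the identification $Ae'\cap J_{i-1}=A\cdot kJ_e$. The inclusion $\supseteq$ is formal: $kJ_e\subseteq e'Ae'\subseteq Ae'$, and by~(\ref{eqn Gamma J}) one has $kJ_e\subseteq J_{i-1}$, which is a two-sided ideal of $A$. For $\subseteq$ I would take $s\in(S\circ e)\cap S_{\le i-1}$, note that $s=s\circ e$, use splitness to pick $t\in S$ with $s\circ t\circ s=s$, and form the idempotent $q:=t\circ s\in\End_\catC(X)$, where $X$ is the object with $e\in\End_\catC(X)$. Lemma~\ref{lemma id equiv} gives $\scrJ(q)=\scrJ(s)$, and because $s\in S_{\le i-1}$ lies in a strictly lower $\scrJ$-class than $S_i$, the element $\beta:=e\circ q\circ e$ satisfies $\scrJ(\beta)<_\scrJ S_i$, so by~(\ref{eqn Gamma J}) it belongs to $J_e$. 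A direct composition check shows $s\circ\beta=s$, which in $A$ translates into $s=c\cdot(s\cdot\beta)$ for some $c\in k^{\times}$, placing $s$ into $A\cdot kJ_e$. This step, whose only substantive ingredient beyond the results recalled in Section~\ref{sec notation} is the splitness of $\catC$, is the main obstacle.

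With this equality in hand, $Ae'/(Ae'\cap J_{i-1})=Ae'/(A\cdot kJ_e)\cong Ae'\otimes_{e'Ae'}(e'Ae'/kJ_e)$ as left $A$-modules. The decomposition~(\ref{eqn Gamma-J-decomposition}) identifies $e'Ae'/kJ_e$ with the twisted group algebra $k_\alpha\Gamma_e$, which is semisimple since $|\Gamma_e|\in k^{\times}$. A standard Wedderburn count (using~(\ref{eqn indep of e}) to transport the simple modules $T_{ir}$ from $\Gamma_{e_i}$ to $\Gamma_e$) gives $k_\alpha\Gamma_e\cong\bigoplus_r T_{ir}^{n_{ir}}$ as left $k_\alpha\Gamma_e$-modules, with $n_{ir}$ as defined in the statement; inflating and tensoring with $Ae'$ over $e'Ae'$ then produces $Ae'/(Ae'\cap J_{i-1})\cong\bigoplus_r\Delta_{ir}^{n_{ir}}$. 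Summing over the $|\epsilon_i|$ choices of $e$ yields the claimed isomorphism $J_i/J_{i-1}\cong\bigoplus_r\Delta_{ir}^{|\epsilon_i|\cdot n_{ir}}$.
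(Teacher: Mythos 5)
Your proof is correct and follows essentially the same route as the paper: decompose $J_i/J_{i-1}$ using the partition $S_i=\bigdisj_{e\in\epsilon_i}(S\circ e)_i$, identify $Ae'\cap J_{i-1}$ with $AJ_e$ (the paper phrases this as the set-level identity $S_{\le i-1}\cap S\circ e = S\circ J_e$, whose proof is word-for-word your argument with $x,y$ in place of $s,t$ and $e\circ y\circ x\circ e$ in place of $\beta$), and then use semisimplicity of $k_\alpha\Gamma_{e_i}$ together with~(\ref{eqn indep of e}) to get the Wedderburn decomposition and conclude.

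One small imprecision worth flagging: you write that the Wedderburn count gives $k_\alpha\Gamma_e\cong\bigoplus_r T_{ir}^{n_{ir}}$ as $k_\alpha\Gamma_e$-modules, but the $T_{ir}$ are modules over $k_\alpha\Gamma_{e_i}$, not over $k_\alpha\Gamma_e$. The paper avoids this by applying~(\ref{eqn indep of e}) \emph{first}, reducing each summand $Ae'\otimes_{e'Ae'}(e'Ae'/kJ_e)$ to $Ae_i'\otimes_{e_i'Ae_i'}(e_i'Ae_i'/kJ_{e_i})$, and only then decomposing $k_\alpha\Gamma_{e_i}$. Your phrase ``using~(\ref{eqn indep of e}) to transport the $T_{ir}$ from $\Gamma_{e_i}$ to $\Gamma_e$'' gets at the same idea, but should be understood as invoking the algebra isomorphism $e'Ae'\cong e_i'Ae_i'$ carrying $kJ_e$ to $kJ_{e_i}$ (from~\ref{noth simples}(c)), under which the pullbacks of the $T_{ir}$ give the Wedderburn decomposition of $k_\alpha\Gamma_e$. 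Either ordering yields the same final isomorphism.
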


\begin{proof}
We first show that, for each $e\in\epsilon_i$, we have
\begin{equation}\label{eqn intersection}
  S_{\leq i-1}\cap S\circ e=S\circ J_e\,.
\end{equation}
By (\ref{eqn Gamma J}), we know that $J_e\subseteq S_{\leq i-1}$, so that 
$S\circ J_e$ is contained in $S_{\leq i-1}\cap S\circ e$. Conversely, if
$x\in S_{\leq i-1}\cap S\circ e$ then $x=x\circ e$, and there is some
$y\in S$ such that $x=x\circ y\circ x$. Thus $x=x\circ e\circ y\circ x\circ e$,
and $e\circ y\circ x\circ e\in e\circ S\circ e\cap S_{\leq i-1}=J_e$, by (\ref{eqn Gamma J}).
Thus $x\in S\circ J_e$, as claimed.

\smallskip
Equation~(\ref{eqn intersection}) implies $J_{i-1}\cap Ae' = AJ_e$ and we obtain the following left $A$-module isomorphisms
\begin{align*}
  J_i/J_{i-1}&=k\bigl(S_{\leq i-1}\,\disj\,\bigdisj_{e\in \epsilon_i}(S\circ e)_i\bigr)/J_{i-1}
  =\bigoplus_{e\in\epsilon_i}    (J_{i-1}+Ae')/J_{i-1}\\
  & \cong \bigoplus_{e\in\epsilon_i} Ae'/(Ae'\cap J_{i-1})=\bigoplus_{e\in\epsilon_i} Ae'/AJ_e\cong 
  \bigoplus_{e\in\epsilon_i} Ae'\otimes_{e'Ae'}(e'Ae'/kJ_e)\,,
\end{align*}
where the last isomorphism is given by the canonical isomorphisms
$$Ae'\otimes_{e'Ae'}(e'Ae'/kJ_e)=Ae'\otimes_{e'Ae'}(e'Ae'/e'Ae'J_e)\cong Ae'/Ae'J_e=Ae'/AJ_e\,.$$
By (\ref{eqn indep of e}), $Ae'\otimes_{e'Ae'}(e'Ae'/kJ_e)\cong Ae_i'\otimes_{e_i'Ae_i'}(e_i'Ae_i'/kJ_{e_i})$, for every $e\in\epsilon_i$. Moreover, recall that the twisted group algebra
$k_\alpha\Gamma_{e_i}$ is semisimple, since we are assuming 
$|\Gamma_{e_i}|\in k^\times$.
Hence, since $T_{i1},\ldots, T_{il_i}$ are representatives
of the isomorphism classes of simple $k_\alpha\Gamma_{e_i}$-modules, $k_\alpha\Gamma_{e_i}\cong \bigoplus_{r=1}^{l_i}T_{ir}^{n_{ir}}$ as
left $k_\alpha\Gamma_{e_i}$-modules, where the multiplicity $n_{ir}$
is as in (\ref{eqn nir}).
Then also
$$Ae_i'\otimes_{e_i'Ae_i'}(e_i'Ae_i'/kJ_{e_i})\cong Ae_i'\otimes_{e_i'Ae_i'}\bigoplus_{r=1}^{l_i}\tilde{T}_{ir}^{n_{ir}}\cong \bigoplus_{r=1}^{l_i}(Ae_i'\otimes_{e_i'Ae_i'}\tilde{T}_{ir})^{n_{ir}}\cong \bigoplus_{r=1}^{l_i}\Delta_{ir}^{n_{ir}}$$
as left $A$-modules. Altogether this gives the desired left $A$-module isomorphism
$$J_i/J_{i-1}\cong \bigoplus_{r=1}^{l_i}\Delta_{ir}^{n_{ir}\cdot |\epsilon_i|}\,.$$
\end{proof}

\begin{proof}{\bf(of Theorem~\ref{thm std modules})}
We follow the strategy outlined in Remark~\ref{rem strategy} and verify
conditions~(i)--(iv) listed there.

\smallskip
(i)\, This follows immediately from Green's condensation theory
and the definition of the modules $D_{ir}$, see~\ref{noth simples}(b)
and \ref{not simples}.

\smallskip

(ii)\, Suppose that $(i,r),(j,s)\in\Lambda$ are such that 
$D_{js}$ occurs as a composition factor of $\Rad(\Delta_{ir})$.
We need to show that $(j,s)<(i,r)$, i.e., that $S_i<_{\scrJ} S_j$.
By \cite[Proposition~5.1]{LS}, one has $S_j\cdot D_{js}\neq \{0\}$, and
if $l\in\{1,\ldots,n\}$ is such that $S_l\cdot D_{js}\neq \{0\}$ then
$S_j\leq_\scrJ S_l$. Assume that $S_i\not<_\scrJ S_j$. Then $S_j\circ S_{\leq i}\subseteq S_{\leq i-1}$,
which, by Lemma~\ref{lemma ideal quot}, implies 
$S_j\cdot \Delta_{ir}=\{0\}$. Thus, $S_j$ also annihilates every composition factor
of $\Delta_{ir}$ and we have $S_j\cdot D_{js}=\{0\}$, a contradiction.

\smallskip
(iii) and (iv):\, By Lemma~\ref{lemma ideal quot}, the left $A$-module $A$
has a filtration all of whose factors are isomorphic to modules
of the form $\Delta_{ir}$ ($(i,r)\in\Lambda$). Let
$(i,r)\in\Lambda$, and let $f_{ir}\in e_i'Ae_i'$ be a primitive
idempotent such that $e_i'Ae_i'f_{ir}=e_i'Af_{ir}$ is a projective
cover of the simple $e_i'Ae_i'$-module $\tilde{T}_{ir}$.

We claim that $P_{ir}:=Af_{ir}$ is a projective cover of the $A$-module
$D_{ir}$. Since $f_{ir}$ is primitive in $e_i'Ae_i'$, it is primitive
in $A$ as well, so that $P_{ir}$ is an indecomposable projective $A$-module.
We have an $e_i'Ae_i'$-epimorphism
$e_i'Af_{ir}\twoheadrightarrow \tilde{T}_{ir}$, and thus also an $A$-epimorphism
$$Af_{ir}\cong Ae_i'\otimes_{e_i'Ae_i'}(e_i'Ae_i'f_{ir})\twoheadrightarrow Ae_i'\otimes_{e_i'Ae_i'}\tilde{T}_{ir}=\Delta_{ir}\twoheadrightarrow D_{ir}\,,$$
since tensoring with $Ae_i'$ is right exact.
Hence $P_{ir}$ must be a projective cover of $D_{ir}$.

Since $Af_{ir}=Ae_i'f_{ir}\subseteq kS_{\leq i}f_{ir}=J_if_{ir}\subseteq Af_{ir}$,
the filtration of the left $A$-module $A$ in (\ref{eqn qh chain})
yields a filtration
\begin{equation}\label{eqn Pir}
  \{0\}=J_0\cdot f_{ir}\subseteq J_1\cdot f_{ir}\subseteq\cdots\subseteq J_{i-1}\cdot f_{ir}\subseteq 
  J_i\cdot f_{ir}=Af_{ir}\,.
\end{equation}
If $j\in\{1,\ldots,i-1\}$ is such that
$S_j\not<_\scrJ S_i$ then we have
$$J_jf_{ir}\subseteq J_j\cdot e_i'Ae_i'\subseteq kS_{\leq j}\cdot kS_{\leq_\scrJ i}=k(S_{\leq j}\cdot S_{\leq_\scrJ i})\subseteq k(S_{\leq j}\cap S_{\leq_\scrJ i})\subseteq J_{j-1}\,,$$
since $S_j\cap S_{\leq\scrJ i}=\emptyset$. So in this case
we get $J_jf_{ir}=J_{j-1}f_{ir}$.

If $j\in\{1,\ldots,i-1\}$ is such that
$S_j<_\scrJ S_i$ then, by Lemma~\ref{lemma ideal quot}, we deduce that, 
for $j=1,\ldots,i-1$,
the indecomposable direct summands of the
quotient $J_j\cdot f_{ir}/J_{j-1}\cdot f_{ir}$ are again isomorphic to
$\Delta_{js}$, for various $s\in\{1,\ldots,l_j\}$ such that $(i,r)<(j,s)$.

\smallskip

It thus suffices to show that $Af_{ir}/J_{i-1}f_{ir}\cong \Delta_{ir}$. 
For then the chain (\ref{eqn Pir}) gives rise to
a filtration of $P_{ir}$ as desired.

Since $e_i'Ae_i'f_{ir}$ is a projective cover of $\tilde{T}_{ir}$,
we have $f_{ir}\cdot \tilde{T}_{ir}\neq \{0\}$. Since the ideal $kJ_{e_i}$ in $e_i'Ae_i'$
annihilates $\tilde{T}_{ir}$ and since $kJ_{e_i}=e_i'Ae_i'\cap J_{i-1}$,
this implies $f_{ir}\notin J_{i-1}$, hence 
$f_{ir}\in e_i'Ae_i'\smallsetminus J_{i-1}\subseteq J_i\smallsetminus J_{i-1}$.
Therefore, $Af_{ir}/J_{i-1}f_{ir}\neq \{0\}$.
Since $Af_{ir}$ has a simple head isomorphic to $D_{ir}$, the same holds for $Af_{ir}/J_{i-1}f_{ir}$.
Finally, since also $\Delta_{ir}$ has a simple head isomorphic
to $D_{ir}$, Lemma~\ref{lemma ideal quot} forces $Af_{ir}/J_{i-1}f_{ir}\cong \Delta_{ir}$,
and the proof of the theorem is complete.
\end{proof}

\end{document}